 \newtheorem{ittheorem}{Theorem}
 \newtheorem{itlemma}{Lemma}
 \newtheorem{itproposition}{Proposition}
 \newtheorem{itdefinition}{Definition}
 \newtheorem{itremark}{Remark}
 \newtheorem{itclaim}{Claim}
 \newtheorem{itcorollary}{\bf Corollary}
 \newenvironment{theorem}{\addtocounter{equation}{1}
 \begin{ittheorem}}{\end{ittheorem}}
 \newenvironment{lemma}{\addtocounter{equation}{1}
 \begin{itlemma}}{\end{itlemma}}
 \newenvironment{proposition}{\addtocounter{equation}{1}
 \begin{itproposition}}{\end{itproposition}}
 \newenvironment{definition}{\addtocounter{equation}{1}
 \begin{itdefinition}}{\end{itdefinition}}
 \newenvironment{remark}{\addtocounter{equation}{1}
 \begin{itremark}}{\end{itremark}}
 \newenvironment{claim}{\addtocounter{equation}{1}
 \begin{itclaim}}{\end{itclaim}}
 \newenvironment{proof}{\noindent {\bf Proof.\,}
 }{\hspace*{\fill}$\qed$\medskip}
 \newenvironment{corollary}{\addtocounter{equation}{1}
 \begin{itcorollary}}{\end{itcorollary}}
 \newcommand{\be}[1]{\begin{eqnarray*}\label{#1}}
 \newcommand{\ee}{\end{eqnarray*}}
 \newcommand{\bl}[1]{\begin{lemma}\label{#1}}
 \newcommand{\el}{\end{lemma}}
 \newcommand{\br}[1]{\begin{remark}\label{#1}}
 \newcommand{\er}{\end{remark}}
 \newcommand{\bt}[1]{\begin{theorem}\label{#1}}
 \newcommand{\et}{\end{theorem}}
 \newcommand{\bd}[1]{\begin{definition}\label{#1}}
 \newcommand{\ed}{\end{definition}}
 \newcommand{\bcl}[1]{\begin{claim}\label{#1}}
 \newcommand{\ecl}{\end{claim}}
 \newcommand{\bp}[1]{\begin{proposition}\label{#1}}
 \newcommand{\ep}{\end{proposition}}
 \newcommand{\bc}[1]{\begin{corollary}\label{#1}}
 \newcommand{\ec}{\end{corollary}}
 \newcommand{\bpr}{\begin{proof}}
 \newcommand{\epr}{\end{proof}}
 \newcommand{\bi}{\begin{itemize}}
 \newcommand{\ei}{\end{itemize}}
 \newcommand{\ben}{\begin{enumerate}}
 \newcommand{\een}{\end{enumerate}}
\def\un{{1\cdots 1}}
\def\un{\{\,1,\dots,N\,\}}
\def\uro{\smash{{U}^{\!\!\!\!\raise5pt\hbox{$\scriptstyle o$}}}}
\def\bp{{\overline{p}}}
\def\bp{{\overline{p}}}
 \def \ba {\begin{array}}
 \def \ea {\end{array}}
 \def \qed {{\heartsuit\hfill}}
 \def \R {{\mathbb R}}
 \def \N {{\mathbb N}}
 \def \cS {{\cal S}}
\def \qed {{\square\hfill}}
  \def\cS{{\cal S}}
\def \qed {{\square\hfill}}
\def\R{{\mathbb R}}
\def\N{{\mathbb N}}
\def\eqref#1{(\ref{#1})}
\begin{document}

\title{A Markov chain representation of the 
Perron--Frobenius eigenvector}

 \author{
Rapha\"el Cerf and Joseba Dalmau
\\
DMA, 
{\'E}cole Normale Sup\'erieure\\
}

\maketitle



\begin{abstract}
\noindent
We consider the problem of finding the Perron--Frobenius
eigenvector of a primitive matrix. Dividing each of the 
rows of the matrix by the sum of the elements in the row,
the resulting new matrix is stochastic.
We give a formula for the Perron--Frobenius eigenvector
of the original matrix, in terms of a realization
of the Markov chain defined by the associated stochastic matrix.
This formula is a generalization of the classical formula
for the invariant probability measure of a Markov chain.
\noindent
\end{abstract}


\allowdisplaybreaks[4]

\noindent
Let $A$ be a primitive matrix of size $N$,
i.e., a non--negative matrix whose $m$--th 
power is positive for some natural number $m$.
The Perron--Frobenius theorem (theorem 1.1 in~\cite{SEN})
states that there 
exist a positive real number $\lambda$ and a vector $u$
on the unit simplex $\lbrace\,x\in\R_+^N:x_1+\cdots+x_N=1\,\rbrace$ such that 
$u^TA=\lambda u^T$.
Moreover, the eigenvalue $\lambda$ is simple,
is larger in absolute value than any other eigenvalue of $A$,
and any non--negative eigenvector of $A$ is a multiple of $u$.
The eigenvalue $\lambda$ is the Perron--Frobenius eigenvalue of $A$ and $u$
is the Perron--Frobenius eigenvector of $A$.
The purpose of this note is to give a Markov chain representation
of the Perron--Frobenius eigenvector $u$.

\noindent
The matrix $A$ can be decomposed as $A(i,j)=f(i)M(i,j)$
with $f(i)$ being the sum of the elements in the $i$--th row of $A$
and $M(i,j)=A(i,j)/f(i)$.
The matrix $M$ is now primitive and stochastic,
so that it naturally defines an ergodic Markov chain.
Let $(X_n)_{n\in\N}$ be a Markov chain with state space
$\lbrace\,1,\dots,N\,\rbrace$ and transition matrix $M$,
denote by $E_k$ the expectation of the Markov chain issued from $k$
and $\tau_k$ the time of the first return of the chain to $k$.
We have the following result.
\begin{theorem}
Let $1\leq k\leq N$. 
The Perron--Frobenius eigenvector $u$ of $A$ is given by the formula
$$\forall i\in\un\qquad
u_i\,=\,\frac{
\displaystyle
E_k\Bigg(\sum_{n=0}^{\tau_k-1}\Big(
1_{\{X_n=i\}}
\lambda^{-n}\prod_{t=0}^{n-1}f(X_t)
\Big)
\Bigg)
}
{
\displaystyle
E_k\Bigg(\sum_{n=0}^{\tau_k-1}\Big(
\lambda^{-n}\prod_{t=0}^{n-1}f(X_t)
\Big)
\Bigg)
}\,.
$$
\end{theorem}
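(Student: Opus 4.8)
\noindent
The plan is to show that the numerator of the claimed formula, regarded as a function $v=(v_i)_{1\le i\le N}$ of $i$, is a finite, non-zero, non-negative left eigenvector of $A$ for the eigenvalue $\lambda$. The Perron--Frobenius theorem then forces $v=cu$ for some constant $c>0$, and summing the identity over $i$ identifies $c$ with the denominator, which yields the formula. Throughout set $Q=A/\lambda$, so that $Q$ is non-negative and primitive with Perron root $1$ and $Q(i,j)=\lambda^{-1}f(i)M(i,j)$, and write $W_n=\lambda^{-n}\prod_{t=0}^{n-1}f(X_t)$, so that $W_0=1$ and $W_{n+1}=\lambda^{-1}f(X_n)W_n$; thus $v_i=E_k\big(\sum_{n=0}^{\tau_k-1}1_{\{X_n=i\}}W_n\big)$ and the denominator equals $E_k\big(\sum_{n=0}^{\tau_k-1}W_n\big)=\sum_i v_i$. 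Since every summand is non-negative, Tonelli's theorem lets me expand $P_k$ and $W_n$ along trajectories and rewrite $v_i=\sum_{n\ge 0}R_n(i)$, where $R_n(i)=\sum\prod_{t=0}^{n-1}Q(x_t,x_{t+1})$ is the total $Q$-weight of the paths $k=x_0,x_1,\dots,x_n=i$ with $x_1,\dots,x_n\neq k$, and $R_0(i)=1_{\{i=k\}}$; in particular $v_k=1$.

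First I would settle finiteness. Let $\widehat Q$ be the principal submatrix of $Q$ obtained by deleting the row and the column indexed by $k$. Since $Q$ is irreducible, $\widehat Q$ has Perron root strictly smaller than $1$ (a classical fact, see~\cite{SEN}), so $\sum_{n\ge0}\widehat Q^{\,n}$ converges. As $\sum_{i\neq k}R_n(i)$ equals the row of $Q$ from $k$ to the states $\neq k$, multiplied by $\widehat Q^{\,n-1}$, multiplied by the all-ones vector, it follows that $\sum_{n\ge1}\sum_{i\neq k}R_n(i)<\infty$; hence every $v_i$ is finite and $0<\sum_i v_i<\infty$.

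Next comes the eigenvector identity $v^TA=\lambda v^T$, i.e.\ $(v^TQ)_j=v_j$ for every $j$. Appending one step to the paths counted by $R_n$ gives $\sum_i R_n(i)Q(i,j)=R_{n+1}(j)$ when $j\neq k$, whereas for $j=k$ it gives $\phi_{n+1}$, the total $Q$-weight of the first-return-to-$k$ excursions of length $n+1$. Summing over $n\ge0$ and using $R_0(j)=1_{\{j=k\}}$, one obtains $(v^TQ)_j=v_j-1_{\{j=k\}}\big(1-\sum_{m\ge1}\phi_m\big)$, so the eigenvector identity reduces to
$$\sum_{m\ge1}\phi_m=1 .$$
This is the crux, and the step I expect to be the main obstacle: it is the Perron--Frobenius counterpart of the recurrence statement $\P_k(\tau_k<\infty)=1$, and it genuinely uses that $\lambda$ is the Perron root (it would fail for another eigenvalue). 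I would argue as follows. From $u^TQ^n=u^T$ we get $\sum_j u_jQ^n(j,k)=u_k$ for every $n$, hence $\sum_{n\le M}\sum_j u_jQ^n(j,k)=(M+1)u_k\to\infty$; since the sum over $j$ is finite and $u_j>0$, some series $\sum_nQ^n(j,k)$ diverges, and composing with a path from $k$ to $j$ then forces $\sum_nQ^n(k,k)=\infty$. Feeding this into the first-return renewal relation $Q^n(k,k)=\sum_{m=1}^{n}\phi_mQ^{n-m}(k,k)$, valid for $n\ge1$, a routine renewal estimate yields $\sum_{m\ge1}\phi_m=1$. (Alternatively: $\big(W_nw(X_n)\big)_{n\ge0}$ is a non-negative martingale, where $w$ is the right Perron--Frobenius eigenvector of $A$, normalized by $Qw=w$; optional stopping at $\tau_k$, with the truncation error $E_k[W_nw(X_n)1_{\{\tau_k>n\}}]$ sent to $0$ thanks to the finiteness just established, gives $E_k[W_{\tau_k}]=1$, which is exactly $\sum_{m\ge1}\phi_m=1$.)

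Finally, $v$ is non-negative, non-zero (as $v_k=1$), finite, and satisfies $v^TA=\lambda v^T$; by the Perron--Frobenius theorem $v=cu$ for some $c>0$, and summing over $i$ gives $c=\sum_i v_i=E_k\big(\sum_{n=0}^{\tau_k-1}W_n\big)$, which is precisely the denominator of the statement. Dividing, $u_i=v_i/\sum_j v_j$, which is the announced formula. When $A$ is already stochastic one has $f\equiv 1$, $\lambda=1$, $W_n\equiv1$, and the whole argument collapses to the classical proof that the invariant law is proportional to the expected number of visits before $\tau_k$; the genuinely new ingredient is the identity $\sum_{m\ge1}\phi_m=1$ and the care needed for the unbounded stopping time $\tau_k$.
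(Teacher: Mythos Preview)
Your argument is correct and follows the same overall scheme as the paper: show that the numerator $v$ is a finite non-negative left eigenvector of $A$ for $\lambda$, observe that the only non-obvious step is the identity $\sum_{m\ge1}\phi_m=1$ (the $Q$-weighted first-return mass from $k$ equals $1$), and then normalize. Where you differ is in the proof of that key identity. The paper isolates it as a preparatory lemma and proves it by iterating the left-eigenvector relation $\lambda x_j=\sum_i x_iA(i,j)$: after $n$ substitutions the remainder is governed by the $(n{-}2)$-th power of the submatrix obtained by zeroing row and column $k$, and it vanishes because that submatrix has spectral radius strictly below $\lambda$ --- the very same spectral-radius fact you invoke, but used to prove the identity directly rather than only for finiteness. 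Your two routes are genuinely different: the renewal argument trades the submatrix bound for the divergence of $\sum_nQ^n(k,k)$ (read off from $u^TQ=u^T$), and the optional-stopping alternative swaps the left eigenvector for the right one $w$, packaging the whole computation into the martingale $(W_nw(X_n))_{n\ge0}$. All three are short; the paper's version is the most self-contained (one eigenvector, one spectral fact), while your martingale proof is arguably the cleanest conceptually and makes most transparent why the identity singles out the Perron root. You are also more careful than the paper about the finiteness of $v$, which the paper leaves implicit.
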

By taking $i=k$ in the above formula we obtain the following corollary.
\begin{corollary}
The Perron--Frobenius eigenvector $u$ of $A$
is given by the formula
$$\forall k\in\un\qquad
u_k\,=\,\frac{1}{
\displaystyle
E_k\Bigg(\sum_{n=0}^{\tau_k-1}\Big(
\lambda^{-n}\prod_{t=0}^{n-1}f(X_t)
\Big)
\Bigg)
}\,.
$$
\end{corollary}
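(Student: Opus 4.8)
The plan is to recognize the claimed ratio as the classical renewal formula for the invariant probability of a Markov chain, read off after a Doob-type change of measure governed by the \emph{right} Perron--Frobenius eigenvector. Since $A^T$ is primitive as well, applying the Perron--Frobenius theorem to it produces a strictly positive vector $v$ with $Av=\lambda v$, the Perron eigenvalue of $A^T$ being that of $A$. As a primitive matrix has no zero row, $f(i)>0$ for every $i$, and one may define
$$\widehat M(i,j)=\frac{A(i,j)\,v_j}{\lambda\,v_i}=\frac{f(i)\,v_j}{\lambda\,v_i}\,M(i,j),\qquad 1\le i,j\le N.$$
The identity $Av=\lambda v$ makes $\widehat M$ stochastic, and $\widehat M(i,j)>0$ exactly when $M(i,j)>0$, so $\widehat M$ is again primitive; it defines an ergodic chain with the same excursion structure as $(X_n)$, and in particular the first return time $\tau_k$ to $k$ is the same function of the trajectory for both chains and is a.s.\ finite.

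I would then compare the two chains at the level of trajectories: for a path $k=x_0,\dots,x_n$,
$$\prod_{t=0}^{n-1}\widehat M(x_t,x_{t+1})=\lambda^{-n}\,\frac{v_{x_n}}{v_k}\Big(\prod_{t=0}^{n-1}f(x_t)\Big)\prod_{t=0}^{n-1}M(x_t,x_{t+1}),$$
so that for every $n$ and every $Z$ depending only on $X_0,\dots,X_n$ one has $\widehat E_k(Z)=E_k\big(Z\,\lambda^{-n}v_{X_n}v_k^{-1}\prod_{t=0}^{n-1}f(X_t)\big)$. Taking $Z=1_{\{X_n=i,\,\tau_k>n\}}$ (the event $\{\tau_k>n\}$ lies in $\sigma(X_0,\dots,X_n)$ and $v_{X_n}=v_i$ there) and summing over $n\ge0$, which is permitted since all summands are non-negative, gives
$$\widehat E_k\!\Big(\sum_{n=0}^{\tau_k-1}1_{\{X_n=i\}}\Big)=\frac{v_i}{v_k}\;E_k\!\Big(\sum_{n=0}^{\tau_k-1}1_{\{X_n=i\}}\,\lambda^{-n}\prod_{t=0}^{n-1}f(X_t)\Big),$$
i.e.\ the left side is $v_i/v_k$ times the numerator of the stated formula.

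To finish, I would use the classical formula $\widehat\pi_i=\widehat E_k\big(\sum_{n=0}^{\tau_k-1}1_{\{X_n=i\}}\big)\big/\widehat E_k(\tau_k)$ for the invariant probability $\widehat\pi$ of $\widehat M$, and note that $u^TA=\lambda u^T$ together with $Av=\lambda v$ shows $(u_iv_i)_i$ to be left-invariant for $\widehat M$, so $\widehat\pi_i=u_iv_i/\sum_l u_lv_l$. Combining this with the previous display, the factor $v_i$ cancels and $u_i$ equals a constant independent of $i$ times the numerator; summing over $i$ and using $\sum_i u_i=1$ evaluates the constant as the reciprocal of $E_k\big(\sum_{n=0}^{\tau_k-1}\lambda^{-n}\prod_{t=0}^{n-1}f(X_t)\big)$, the denominator in the statement, proving the theorem. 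The corollary drops out on setting $i=k$: among the indices $0\le n\le\tau_k-1$ only $n=0$ satisfies $X_n=k$, and there the weight is the empty product $1$, so the numerator reduces to $E_k(1)=1$.

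The points needing care are of bookkeeping nature: that $v$ may be taken strictly positive — needed both for the $h$-transform to be defined and for the two chains to have identical support — and that the termwise passage from $\widehat M$- to $M$-expectations is valid, which it is by non-negativity. If one prefers to avoid the change of measure, an equivalent route is to set $\nu_i:=E_k\big(\sum_{n=0}^{\tau_k-1}1_{\{X_n=i\}}\lambda^{-n}\prod_{t=0}^{n-1}f(X_t)\big)$, check that $\nu_i<\infty$ for all $i$ — the principal submatrix of $A$ obtained by deleting row and column $k$ has spectral radius strictly below $\lambda$, since $A$ is irreducible — verify $\nu^TA=\lambda\nu^T$ by a single use of the Markov property at time $n$, which leaves a possible defect only in coordinate $k$, proportional to $E_k\big(\lambda^{-\tau_k}\prod_{t=0}^{\tau_k-1}f(X_t)\big)-1$, and then kill that defect by pairing with $v$ (equivalently, by optional stopping for the martingale $\lambda^{-n}\prod_{t=0}^{n-1}f(X_t)\,v(X_n)$); Perron--Frobenius uniqueness then forces $\nu$ to be a multiple of $u$, and normalization yields the ratio.
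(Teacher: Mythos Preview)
Your proof is correct. The final step---setting $i=k$ so that only the term $n=0$ survives in the numerator and the weight there is the empty product $1$---is exactly how the paper obtains the corollary from its theorem. Where you diverge is in the proof of the theorem itself. The paper works directly: it sets $y_i=E_k\big(\sum_{n<\tau_k}1_{\{X_n=i\}}\lambda^{-n}\prod_{t<n}f(X_t)\big)$, verifies $y^TA=\lambda y^T$ coordinate by coordinate via one Markov step, and handles the only problematic coordinate $j=k$ with a preparatory lemma asserting that $E_k\big(\lambda^{-\tau_k}\prod_{t<\tau_k}f(X_t)\big)=1$, proved by iteratively expanding the left-eigenvector equation and using that the principal submatrix with row and column $k$ removed has spectral radius strictly below $\lambda$. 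Your main route instead performs a Doob $h$-transform with the right eigenvector $v$: after passing to $\widehat M(i,j)=A(i,j)v_j/(\lambda v_i)$ the theorem becomes the classical occupation formula for $\widehat\pi_i\propto u_iv_i$, and the factor $v_i$ cancels. This is more conceptual---it exhibits the formula as the classical renewal identity in disguise---and sidesteps the lemma, at the price of importing $v$ and the classical formula as inputs. Your sketched alternative is essentially the paper's argument with a different finish: you kill the coordinate-$k$ defect by pairing with $v$ (or optional stopping for the martingale $\lambda^{-n}\prod_{t<n}f(X_t)\,v(X_n)$) rather than by the lemma's series computation; both devices ultimately rest on the same spectral-gap fact for the submatrix.
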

This formula is a generalization of the classical formula for
the invariant probability measure of a Markov chain.
Indeed, in the particular case where $A$ is stochastic,
$f$ is constant equal to 1, $\lambda$ is also equal to 1,
and $u$ corresponds to the invariant probability measure of the 
Markov chain. Thus, the formula of the corollary becomes 
the well--known formula 
$$\forall k\in\un\qquad
u_k\,=\,\frac{1}{
\displaystyle
E_k(\tau_k)
}\,.$$
Before proving the theorem, we state a preparatory lemma.
\begin{lemma}
\label{cytra}
Let $A$ be a non--negative primitive matrix 
of size $N$.
Its Perron--Frobenius eigenvalue~$\lambda$
satisfies the following identity: for any
$k\in\un$,
$$\displaylines{
1\,=\,\frac{1}{\lambda}A(k,k)+
\frac{1}{\lambda^2}\sum_{i_1\neq k}A(k,i_1)A(i_1,w)+\cdots\hfil\cr
\hfil+\frac{1}{\lambda^{n}}\sum_{i_1,\dots,i_{n-1}\neq k}A(k,i_1)A(i_1,i_2)\cdots
A(i_{n-1},k)+\cdots}$$
\end{lemma}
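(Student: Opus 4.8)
The plan is to read the right-hand side as the first-return generating function of the weighted walks of $A$, evaluated at $z=1/\lambda$, and then to use primitivity to see that this evaluation equals~$1$.

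For $n\geq 1$ put
$$f_n\,=\,\sum_{i_1,\dots,i_{n-1}\neq k}A(k,i_1)A(i_1,i_2)\cdots A(i_{n-1},k)\qquad(\text{so }f_1=A(k,k)),$$
and for $n\geq 0$ put $g_n=A^n(k,k)$, so that $g_0=1$ and the right-hand side of the lemma is $\sum_{n\geq 1}\lambda^{-n}f_n$. Cutting a walk from $k$ to $k$ of length $n$ at the first time it returns to $k$ gives the renewal identity $g_n=\sum_{m=1}^n f_m\,g_{n-m}$ for $n\geq 1$. Let $w$ be a positive right eigenvector of $A$ for $\lambda$ (it exists: $A^T$ is again primitive with Perron--Frobenius eigenvalue $\lambda$, apply the Perron--Frobenius theorem to it). Then $g_n w_k\leq\sum_j A^n(k,j)w_j=\lambda^n w_k$, so $g_n\leq\lambda^n$, and $0\leq f_n\leq g_n$. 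Hence the power series $G(z)=\sum_{n\geq 0}g_n z^n$ and $F(z)=\sum_{n\geq 1}f_n z^n$ are finite for $z\in[0,1/\lambda)$, and the renewal identity becomes $G(z)=1+F(z)G(z)$; since $G(z)\geq 1$, this gives $F(z)=1-1/G(z)<1$ on $[0,1/\lambda)$.

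Letting $z\uparrow 1/\lambda$, monotone convergence gives $F(z)\uparrow\sum_{n\geq 1}\lambda^{-n}f_n$ and $G(z)\uparrow\sum_{n\geq 0}\lambda^{-n}g_n=:\beta\in(0,+\infty]$, and passing to the limit in $F(z)=1-1/G(z)$ shows that the desired sum equals $1-1/\beta$. Thus the lemma reduces to proving $\beta=+\infty$, i.e. that $\sum_n\lambda^{-n}A^n(k,k)$ diverges. This is where primitivity really enters. Choose $m$ with $A^m>0$ and set $c=\min_{i,j}A^m(i,j)>0$; then $A^{2m+n}(k,k)=\sum_{p,q}A^m(k,p)A^n(p,q)A^m(q,k)\geq c^2\sum_{p,q}A^n(p,q)$. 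On the other hand, with $u$ the positive left eigenvector of the statement, $\lambda^n\sum_i u_i=u^TA^n\mathbf 1\leq(\max_i u_i)\sum_{p,q}A^n(p,q)$. Combining these, $A^N(k,k)\geq\mathrm{const}\cdot\lambda^N$ for all $N\geq 2m$, so $\beta=+\infty$ and the identity follows.

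The main obstacle is this last divergence step: the identity fails if one only assumes $A$ non-negative with $\lambda$ its spectral radius (for a reducible or suitably periodic pattern the diagonal entries $A^n(k,k)$ may decay faster than $\lambda^n$, making $\beta$ finite and the sum strictly less than $1$). What rescues it is that positivity of some power of $A$ forces the returns to $k$ to be heavy enough, and the quantitative version of this — obtained above from the strict positivity of the Perron--Frobenius eigenvectors together with a sandwiching of $A^n(k,k)$ by the entries of $A^m$ — is exactly what the renewal argument needs. The remaining ingredients (the renewal identity, the bound $f_n\le g_n\le\lambda^n$, and the interchange of limits with sums of non-negative terms) are routine.
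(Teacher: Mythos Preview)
Your argument is correct and follows a genuinely different route from the paper. The paper proceeds algebraically: starting from the left eigenvector relation $x^TA=\lambda x^T$, it writes $1=\frac{1}{\lambda x_k}\sum_i x_iA(i,k)$, iteratively substitutes for each $x_i$ with $i\neq k$, and obtains after $n$ steps the partial sum of the series plus a remainder of the form $\lambda^{-n}\sum_{i,j\neq k}\frac{x_i}{x_k}B^{n-2}(i,j)A(j,k)$, where $B$ is $A$ with the $k$-th row and column zeroed; primitivity enters via the fact that the spectral radius of $B$ is strictly less than $\lambda$, which kills the remainder. You instead recognise the series as the first-return generating function $F$, relate it to the full-return generating function $G$ through the renewal identity $G=1+FG$, and reduce the claim to $G(1/\lambda)=+\infty$; primitivity then enters through your sandwich bound $A^{2m+n}(k,k)\geq c^2\sum_{p,q}A^n(p,q)\geq\text{const}\cdot\lambda^n$. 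The paper's approach is shorter and more self-contained once one quotes the submatrix spectral-radius inequality, while yours makes the probabilistic content (recurrence of the $\lambda^{-1}A$-weighted walk at $k$) completely explicit and gives a clean explanation of why primitivity cannot be weakened. Both arguments ultimately hinge on the same phenomenon --- that removing the state $k$ strictly shrinks the growth rate --- but they package it differently: the paper as a spectral gap for $B$, you as divergence of $\sum_n\lambda^{-n}A^n(k,k)$.
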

\begin{proof}
Let $(x_i)_{1\leq i\leq N}$ be a non--negative eigenvector associated to the Perron--Frobenius eigenvalue~$\lambda$
of $A$:
$$\forall j\in\un\qquad\sum_{i=1}^N x_i A(i,j)\,=\,\lambda x_j\,.$$
Since $A$ is primitive, all the components of $x$ are positive.
Let $1\leq k\leq N$ be fixed.
We have thus
$$1\,=\,\frac{1}{\lambda x_k}
\sum_{i=1}^N x_i A(i,k)\,=\,
\frac{1}{\lambda}A(k,k)+
\sum_{i\neq k} 
\frac{x_i}{\lambda x_k}
A(i,k)\,.$$
We replace $x_i$ in the last sum and we get
\begin{align*}
1\,&=\,
\frac{1}{\lambda}A(k,k)+
\sum_{i\neq k} \sum_{i'=1}^N
\frac{x_{i'}}{\lambda^2 x_k}
A(i',i)A(i,k)\cr
&=\,
\frac{1}{\lambda}A(k,k)+
\sum_{i\neq k} 
\frac{1}{\lambda^2 }
A(k,i)A(i,k)\,+\,
\sum_{i,i'\neq k} 
\frac{x_{i'}}{\lambda^2 x_k}
A(i',i)A(i,k)\,.
\end{align*}
Iterating this procedure, we obtain, for $n\geq 1$,
$$\displaylines{
1\,=\,\sum_{t=0}^{n-1}
\frac{1}{\lambda^{t+1}}\sum_{i_1,\dots,i_{t}\neq k}A(k,i_1)A(i_1,i_2)\cdots
A(i_{t},k)\,+\hfil\cr
\hfil
\frac{1}{\lambda^{n}}
\sum_{i_1,\dots,i_{n-1}\neq k}
\frac{x_{i_1}}{x_k}
A(i_1,i_2)\cdots
A(i_{n-1},k)\,.}$$
Let $B$ be the matrix obtained from $A$ by filling with zeroes
the line and the column associated 
to~$k$ . The last term of the previous identity can be rewritten as
$$
\frac{1}{\lambda^{n}}
\sum_{i,j\neq k}
\frac{x(i)}{x(k)}
B(i,j)^{n-2}
A(j,k)\,.$$
Yet it follows from part~(e) of theorem~$1.1$ of \cite{SEN} that the spectral
radius of $B$ is strictly less than~$\lambda$, whence
$$\forall i,j\in \un\qquad
\lim_{n\to\infty}
\frac{1}{\lambda^{n}}
B(i,j)^{n-2}\,=\,0\,.$$
Thus the previous sum vanishes as $n$ goes to $\infty$. Passing to the limit,
we obtain the desired identity.
\end{proof}

\noindent
We now proceed to the proof of the theorem.

\medskip
\begin{proof}
Let us note $E_k$ and $\tau_k$ simply by $E$ and $\tau$.
We set, for $1\leq i\leq N$,
$$y_i\,=\,
\displaystyle
E\Bigg(\sum_{n=0}^{\tau-1}\Big(
1_{\{X_n=i\}}
\lambda^{-n}\prod_{t=0}^{n-1}f(X_t)
\Big)
\Bigg)
\,.$$
Obviously, the vector $(y_i)_{1\leq i\leq N}$ is non--null and its components are non--negative. 
Let us compute
\begin{align*}
\sum_{i=1}^N
y_i f(i)&M(i,j)\,=\,\cr
&\phantom{=}\,\sum_{i=1}^N\sum_{n\geq 0}
E\Bigg(1_{\{\tau>n\}}
\lambda^{-n}
\Big(
\prod_{t=0}^{n-1}f(X_t)
\Big)
1_{\{X_n=i\}}
f(i)M(i,j)
\Bigg)\cr
&=\,\sum_{i=1}^N\sum_{n\geq 0}
E\Bigg(1_{\{\tau>n\}}
\lambda^{-n}
\Big(
\prod_{t=0}^{n}f(X_t)
\Big)
1_{\{X_n=i\}}
1_{\{X_{n+1}=j\}}
\Bigg)\cr
&=\,
E\Bigg(\sum_{n=0}^{\tau-1}
1_{\{X_{n+1}=j\}}
\lambda^{-n}
\Big(
\prod_{t=0}^{n}f(X_t)
\Big)
\Bigg)\cr
&=\,
\lambda\,E\Bigg(\sum_{n=1}^{\tau}
1_{\{X_{n}=j\}}
\lambda^{-n}
\Big(
\prod_{t=0}^{n-1}f(X_t)
\Big)
\Bigg)\,.
\end{align*}
Suppose that $j\neq k$. Then the term in the last sum vanishes 
for $n=0$ or $n=\tau$, and we recover the identity
$$
\sum_{i=1}^N y_if(i)M(i,j)\,=\,\lambda y_j\,.$$
For $j=k$, we obtain
$$
\sum_{i=1}^N y_if(i)M(i,j)\,=\,
\lambda\,E\Bigg(
\lambda^{-\tau}
\prod_{t=0}^{\tau-1}f(X_t)
\Bigg)\,.
$$
The last expectation can be rewritten as
$$\displaylines{
E\Bigg(
\lambda^{-\tau}
\prod_{t=0}^{\tau-1}f(X_t)
\Bigg)
\,=\,
\sum_{n\geq 1}
E\Bigg(
1_{\{\tau=n\}}
\lambda^{-n}
\prod_{t=0}^{n-1}f(X_t)
\Bigg)\hfill\cr
\,=\,
\sum_{n\geq 1}
\sum_{i_1,\dots,i_{n-1}\neq k}
\lambda^{-n}f(k)f(i_1)\cdots f(i_{n-1})
\hfill\cr
\hfill\times
P\big(X_1=i_1,\dots,X_{n-1}=i_{n-1},X_n=k\big)\cr
\,=\,
\sum_{n\geq 1}
\sum_{i_1,\dots,i_{n-1}\neq k}
\lambda^{-n}f(k)M(k,i_1)\cdots f(i_{n-1})M(i_{n-1},k)
\,.
}$$
This last sum is equal to~$1$ by lemma~\ref{cytra}.
Noticing that
$y_k=1$, we conclude that
$$
\sum_{i=1}^N y_if(i)M(i,k)\,=\,\lambda y_k\,.$$
Therefore
the vector $(y_i)_{1\leq i\leq k}$ is an eigenvector of $A$ associated to~$\lambda$.
We normalize it so that it belongs to the unit simplex and we obtain
the formula stated in the theorem.
\end{proof}

\noindent
The decomposition of the matrix $A$
in terms of $f$ and $M$ might seem artificial.
However,
it arises naturally in several situations.
We illustrate this fact in the following two examples,
which also provided us with the motivation to construct the 
probabilistic representation of the Perron--Frobenius
eigenvector given in the above results.
\medskip

\noindent {\bf Mutation--selection equilibrium.}
Consider a mutation--selection model
in which individuals have an associated type,
the possible types being numbered from $1$ to $N$.
Individuals reproduce and mutate,
and mutations, which only happen during the reproduction events,
change the type of the offspring.
We fix a function $f:\un\rightarrow\,]0,+\infty[\,$
and a primitive stochastic matrix $(M(i,j))_{1\leq i,j\leq N}$.
An individual of type $i$ reproduces at rate $f(i)$,
and the offspring mutates to type $j$ with probability $M(i,j)$.
Let the vector $(x_k)_{1\leq k\leq N}$ represents
the proportions of the different types in a population at equilibrium.
Because of the equilibrium assumption,
the rate of creation of type $k$ individuals 
must be equal to the rate of destruction
of type $k$ individuals.
If the rate of destruction of an individual is independent 
of its type,
we obtain the mutation--selection equilibrium equation
$$\forall k\in \un\qquad
x_k\,\sum_{i=1}^N x_if(i)\,=\,
\,\sum_{i=1}^N x_if(i)M(i,k)\,.\qquad\qquad(\cal S)$$
This equilibrium equation is of high interest and it arises in a wide 
variety of models,
for instance in Eigen's quasispecies model~\cite{ECS1}.
The main question is whether a solution of $(\cS)$ exists
in the $N-1$ dimensional unit simplex, and whether the solution, if it exists,
is unique or not.
In view of the Perron--Frobenius theorem,
the unique solution of $(\cal S)$ in the unit simplex
is given by the Perron--Frobenius eigenvector $u$
of the matrix $A=(
f(i)M(i,j))_{1\leq i,j\leq N}$.
The Perron--Frobenius eigenvalue $\lambda$ 
corresponds to the mean fitness at equilibrium
$\lambda=\sum_{1\leq i\leq N} u_if(i)$.
In this particular setting,
the Markov chain $(X_n)_{n\geq0}$ can be naturally
interpreted as the random walk of a mutant in a neutrally
evolving population, that is, if $f$ is constant equal to 1.
\medskip

\noindent {\bf Multitype Galton--Watson process.} 
We consider next a probabilistic counterpart of the mutation--selection
equilibrium above. Consider a multitype Galton--Watson process
in which individuals of type $i$ produce offsprings according
to a law with mean $f(i)$ and finite variance,
and the offspring of a type $i$ individual
becomes of type $j$ with probability $M(i,j)$.
The matrix $A=(f(i)M(i,j))_{1\leq i,j\leq N}$
is known as the mean matrix of the process.
It is well--known (chapter 2 of~\cite{Harris}) that if the Perron--Frobenius eigenvalue $\lambda$
of $A$ is strictly larger than one, the multitype Galton--Watson process
has a positive probability of survival.
Conditioned on the survival event,
the vector of proportions of the different types 
converges to $u$ when time goes to $\infty$, $u$
being the Perron--Frobenius eigenvector of $A$. 

\bibliographystyle{plain}
\bibliography{reqs2}
 \thispagestyle{empty}

\end{document}